\theoremstyle{plain}
\newtheorem{theorem}{Theorem}[section]
\newtheorem{corollary}[theorem]{Corollary}
\newtheorem{lemma}[theorem]{Lemma}
\newtheorem{proposition}[theorem]{Proposition}
\theoremstyle{definition}
\newtheorem{definition}[theorem]{Definition}
\newtheorem{remark}[theorem]{Remark}
\theoremstyle{remark}
\newcommand{\abs}[1]{\lvert#1\rvert}
\newcommand{\norm}[1]{\lVert#1\rVert}
\newcommand{\bigabs}[1]{\bigl\lvert#1\bigr\rvert}
\newcommand{\Bigabs}[1]{\Bigl\lvert#1\Bigr\rvert}
\renewcommand{\le}{\leqslant}
\renewcommand{\ge}{\geqslant}
\renewcommand{\mid}{\::\:}
\newcommand{\term}[1]{{\textit{\textbf{#1}}}}
\DeclareMathOperator{\Span}{span}
\DeclareMathOperator{\codim}{codim}
\DeclareMathOperator{\dist}{dist}
\begin{document}
\baselineskip 18pt

\title[Almost invariant half-spaces]
       {Almost invariant half-spaces of operators on Banach spaces}

\author[G.~Androulakis]{George Androulakis}
\author[A.I.~Popov]{Alexey I. Popov}
\author[A.~Tcaciuc]{Adi Tcaciuc}
\author[V.G.~Troitsky]{Vladimir G. Troitsky}

\address[G.Androulakis]{Department of Mathematics,
          University of South Carolina, Columbia, SC 29208, USA}
\email{giorgis@math.sc.edu}

\address[A.~I. Popov, and V.~G. Troitsky]{Department of Mathematical
  and Statistical Sciences, University of Alberta, Edmonton,
  AB, T6G\,2G1. Canada}
\email{apopov@math.ualberta.ca, vtroitsky@math.ualberta.ca}

\address[A. Tcaciuc]{Mathematics and Statistics Department,
   Grant MacEwan College, Edmonton, Alberta, Canada T5J
   P2P, Canada}
\email{tcaciuc@math.ualberta.ca}

\thanks{The third and the fourth authors were supported by NSERC}

\date{\today.}

\begin{abstract}
  We introduce and study the following modified version of the
  Invariant Subspace Problem: whether every operator $T$ on a Banach
  space has an almost invariant half-space, that is, a subspace $Y$ of
  infinite dimension and infinite codimension such that $Y$ is of
  finite codimension in $T(Y)$. We solve this problem in the
  affirmative for a large class of operators which includes
  quasinilpotent weighted shift operators on $\ell_p$ ($1\le
  p<\infty$) or $c_0$.
\end{abstract}

\maketitle

\section{Introduction}\label{intro}

Throughout the paper, $X$ is a Banach space and by ${\mathcal L}(X)$ we
denote the set of all (bounded linear) operators on~$X$. By a
``subspace'' of a Banach space we always mean a ``closed subspace''.
Given a sequence $(x_n)$ in~$X$, we write $[x_n]$ for the closed
linear span of $(x_n)$.

\begin{definition}
  A subspace $Y$ of a Banach space $X$ is called a \term{half-space}
  if it is both of infinite dimension and of infinite codimension in~$X$.
\end{definition}

\begin{definition}\label{AI}
  If $T\in\mathcal{L}(X)$ and $Y$ is a subspace of~$X$, then $Y$ is
  called \term{almost invariant} under~$T$, or
  \term{$T$-almost invariant}, if there exists a finite dimensional
  subspace $F$ of $X$ such that $T(Y)\subseteq Y+F$.
\end{definition}

In this work, the following question will be referred to as the
\term{almost invariant half-space problem}: \emph{Does every operator on a
Banach space have an almost invariant half-space?} Observe that every
subspace of $X$ that is not a half-space is clearly almost invariant
under any operator. Also, note that the almost invariant half-space
problem is not weaker than the well known invariant subspace problem,
because in the latter the invariant subspaces are not required to be
half-spaces.

The natural question whether the usual unilateral right shift operator
acting on a Hilbert space has almost invariant half-spaces has an
affirmative answer. Moreover, it is known that this operator has even
invariant half-spaces. Indeed, by \cite[Corollary~3.15]{Radjavi:03},
this operator has an invariant subspace with infinite-dimensional
orthogonal complement (thus the invariant subspace is of infinite
codimension). It is not hard to see that the space exhibited in the
proof of this statement is in fact infinte dimensional.

It is natural to consider Donoghue operators as candidates for
counter\-examples to the almost invariant half-space problem, as their
invariant subspaces are few and well understood. Recall that a
Donoghue operator $D \in {\mathcal L}(\ell_2)$ is an operator defined
by
\begin{displaymath}
  De_0=0,\quad De_i=w_ie_{i-1},\quad i\in\mathbb N,
\end{displaymath}
where $(w_i)$ is a sequence of non-zero complex numbers such that
$\bigl(\abs{w_i}\bigr)$ is monotone decreasing and in~$\ell_2$.  It is
known that if $D$ is a Donoghue operator then $D$ has only invariant
subspaces of finite dimension and $D^*$ has only invariant subspaces
of finite codimension (see \cite[Theorem~4.12]{Radjavi:03}). Hence
neither $D$ nor $D^*$ have invariant half-spaces. In
Section~\ref{w-shifts} we will employ the tools of Section~\ref{tools}
to show that, nevertheless, every Donoghue operator has almost
invariant half-spaces. We do not know whether the operators
constructed by Enflo~\cite{Enflo:87} and Read~\cite{Read:84} have
almost invariant half-spaces.

The following result explains how almost invariant half-spaces of operators are 
related to invariant subspaces of perturbed operators.

\begin{proposition}
  Let $T\in\mathcal{L}(X)$ and $H\subseteq X$ be a half-space. Then
  $H$ is almost invariant under $T$ if and only if $H$ is invariant
  under $T+K$ for some finite rank operator~$K$.
\end{proposition}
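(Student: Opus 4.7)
The plan is to prove the two implications separately, with one being essentially a one-line calculation and the other requiring the construction of a finite-rank perturbation from the given finite-dimensional ``error'' subspace~$F$.

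For the easy direction, suppose $H$ is invariant under $T+K$ with $K$ of finite rank. Then for every $y\in H$ we have $Ty=(T+K)y-Ky\in H+\Range K$, so $T(H)\subseteq H+\Range K$ with $F:=\Range K$ finite dimensional; this gives almost invariance.

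For the nontrivial direction, assume $T(H)\subseteq H+F$ with $\dim F<\infty$. The idea is to build a bounded finite-rank map $K_0\colon H\to F$ so that $Ty+K_0y\in H$ for every $y\in H$, and then extend $K_0$ to~$X$. First I would reduce to the case $H\cap F=\{0\}$: write $F=(F\cap H)\oplus F_1$ for some finite-dimensional complement $F_1$; then $H+F=H+F_1$ and $H\cap F_1=\{0\}$, so one may replace $F$ by~$F_1$. Since $F_1$ is finite dimensional and $H$ is closed, $H+F_1$ is closed, and the projection $\pi\colon H+F_1\to F_1$ along $H$ is bounded (either by appealing to the closed graph theorem or by noting that any algebraic complement of a finite-dimensional subspace in a Banach space is topological). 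Set $K_0:=-\pi\circ T|_H\colon H\to F_1$; this is bounded and has finite rank, and by construction $(T+K_0)y=(I-\pi)Ty\in H$ for every $y\in H$.

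It remains to extend $K_0$ to a finite-rank operator on all of~$X$. Fix a basis $e_1,\dots,e_n$ of $F_1$ and write $K_0y=\sum_{i=1}^n\varphi_i(y)e_i$ for uniquely determined bounded linear functionals $\varphi_i\in H^*$. Using the Hahn--Banach theorem, extend each $\varphi_i$ to some $\widetilde\varphi_i\in X^*$ and put
\begin{equation*}
  Kx:=\sum_{i=1}^n\widetilde\varphi_i(x)\,e_i,\qquad x\in X.
\end{equation*}
Then $K\in\mathcal L(X)$ has finite rank and $K|_H=K_0$, so $(T+K)(H)\subseteq H$ as desired.

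The argument is entirely elementary; the only point that requires a little care is the reduction to $H\cap F=\{0\}$, which is what makes the projection $\pi$ well defined and hence the candidate perturbation $K_0$ unambiguous. The rest is a standard Hahn--Banach extension, and there is no genuine obstacle.
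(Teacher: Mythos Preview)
Your proof is correct and follows essentially the same approach as the paper: reduce to the case $H\cap F=\{0\}$, use the bounded projection of $H+F$ onto $F$ along~$H$, and set $K$ to be (an extension of) minus this projection composed with~$T$. The only cosmetic differences are that the paper achieves $H\cap F=\{0\}$ by taking $F$ of minimal dimension rather than by passing to a complement~$F_1$, and that the paper extends the projection $P$ to all of $X$ first and then defines $K=-\widetilde P T$, whereas you form $K_0=-\pi\circ T|_H$ on $H$ first and then extend; both routes yield the same operator.
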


\begin{proof}
  Suppose that $T$ has an almost invariant half-space~$H$. Let $F$ be
  a subspace of the smallest dimension satisfying the condition in
  Definition~\ref{AI}. Then we have $H\cap F=\{0\}$. Define $P\colon
  H+F\to F$ by $P(h+f)=f$.  Since $P$ is a finite rank operator, we
  can extend it to a finite rank operator on $X$ using Hahn-Banach
  theorem.  That is, there exists $\widetilde{P}\colon X\to F$ such that
  $\widetilde{P}|_{H+F}=P$. Define $K\colon X\to X$ by $K:=-\widetilde{P}T$.
  Clearly $K$ has finite rank and for any $h\in H$ we have $Th=h'+f$
  for some $h'\in H$ and $f\in F$, so that
  \begin{displaymath}
   (T+K)(h)=Th-\widetilde{P}Th=h'+f-\widetilde{P}(h'+f)=h'+f-f=h'
  \end{displaymath}
  Therefore, $(T+K)H\subseteq H$, which shows that $T+K$ has an
  invariant half-space.

  Conversely, from $(T+K)(H)\subseteq H$ it follows immediately that
  $T(H)\subseteq H+K(H)$,so that $H$ is an almost invariant half space
  for~$T$.
\end{proof}

Finally we would like to point out that if an operator has almost
invariant half-spaces, then so does its adjoint. For that we will need
two simple lemmas. The proof of the first lemma is elementary.

\begin{lemma}\label{hs-criterion}
  Let $X$ be a Banach space and $Y$ be a subspace of~$X$. Then $Y$ is
  infinite codimensional if and only if $Y^\perp$ is of infinite
  dimension. Thus $Y$ is a half-space if and only if both $Y$ and
  $Y^\perp$ are of infinite dimension.
\end{lemma}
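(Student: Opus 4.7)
The plan is to reduce the lemma to the standard identification of the annihilator of a subspace with the dual of the quotient. Letting $q \colon X \to X/Y$ denote the quotient map, the pullback
\[
\Phi \colon (X/Y)^* \to Y^\perp, \qquad \varphi \mapsto \varphi \circ q,
\]
is a linear isomorphism: injectivity is immediate since $q$ is surjective, and surjectivity is the universal property of the quotient, namely that any $f \in Y^\perp$ vanishes on $Y$ and hence factors uniquely through a bounded functional on $X/Y$. In particular $\dim Y^\perp = \dim (X/Y)^*$.

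With this identification in hand, the equivalence ``$Y$ is infinite codimensional iff $Y^\perp$ is infinite dimensional'' reduces to the elementary fact that a Banach space $Z$ is finite dimensional if and only if $Z^*$ is, with the same dimension in the finite case. The only non-trivial direction is that $\dim Z = \infty$ forces $\dim Z^* = \infty$: I would pick a linearly independent sequence $(z_n)$ in $Z$ and use Hahn--Banach to produce $\varphi_n \in Z^*$ with $\varphi_n(z_n) = 1$ and $\varphi_n(z_m) = 0$ for $m < n$; a triangular argument then yields linear independence of $(\varphi_n)$.

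The second assertion of the lemma is then an immediate corollary of the definition of a half-space together with the first part: $Y$ is a half-space iff both $\dim Y = \infty$ and $\codim Y = \infty$, and the latter is now the same as $\dim Y^\perp = \infty$. There is no real obstacle here; the only inputs beyond linear algebra are two applications of Hahn--Banach (once to factor through the quotient, once to separate points of an infinite dimensional space by functionals), which matches the authors' remark that the proof is elementary.
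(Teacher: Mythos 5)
Your proof is correct. The paper in fact supplies no argument for this lemma --- it only remarks, just before the statement, that ``the proof of the first lemma is elementary'' --- so there is no proof in the paper to compare against. Your route is the natural one: you use the canonical linear isomorphism $(X/Y)^* \cong Y^\perp$ (which is available because $Y$ is closed, so $X/Y$ is a Banach space and any $f\in Y^\perp$ factors boundedly through the quotient), thereby reducing the claim to the fact that a Banach space $Z$ and its dual $Z^*$ are simultaneously finite- or infinite-dimensional. Both ingredients are handled correctly: the finite-dimensional case is linear algebra, and in the infinite-dimensional case your Hahn--Banach extension of the triangular system of functionals $\varphi_n$ on $\Span\{z_1,\dots,z_n\}$ (legitimate since finite-dimensional subspaces are closed and every linear functional on them is bounded) yields a linearly independent sequence in $Z^*$. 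The second assertion of the lemma then follows by definition, as you say.
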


\begin{lemma}\label{hs-annihilator}
  A subspace $Y$ of $X$ is a half-space if and only if $Y^\perp$ is a
  half-space in~$X^*$.
\end{lemma}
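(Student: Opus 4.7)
My plan is to derive Lemma \ref{hs-annihilator} directly from Lemma \ref{hs-criterion} by applying it twice: once to $Y$ as a subspace of $X$, and once to $Y^{\perp}$ as a subspace of $X^{*}$. The first application immediately yields
\[
\codim_X Y = \infty \iff \dim Y^{\perp} = \infty.
\]
So the remaining task is to establish the companion equivalence
\[
\dim Y = \infty \iff \codim_{X^{*}} Y^{\perp} = \infty,
\]
and then both halves of the half-space condition for $Y$ can be swapped with the half-space condition for $Y^{\perp}$.

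For the companion equivalence I would use the standard duality isomorphism $X^{*}/Y^{\perp} \cong Y^{*}$ (which just reflects the Hahn-Banach theorem, namely that every bounded functional on $Y$ extends to $X$, and two extensions differ by an element of $Y^{\perp}$). Under this isomorphism, $\codim_{X^{*}} Y^{\perp}$ is exactly $\dim Y^{*}$, so the question reduces to the well-known fact that $\dim Y^{*}$ is infinite iff $\dim Y$ is infinite. One direction is trivial (if $\dim Y < \infty$ then $\dim Y^{*} = \dim Y$), and for the other direction one notes that if $Y$ contains a linearly independent sequence $(y_{n})$, then by Hahn-Banach one can produce functionals $(f_{n})$ on $Y$ with $f_{n}(y_{m}) = \delta_{nm}$, so $Y^{*}$ is infinite dimensional as well.

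As an alternative to the quotient argument, I could instead apply Lemma \ref{hs-criterion} a second time to $Y^{\perp}\subseteq X^{*}$, obtaining $\codim_{X^{*}} Y^{\perp} = \infty \iff \dim (Y^{\perp})^{\perp} = \infty$, and then invoke the double annihilator theorem: since $Y$ is (by our blanket convention) closed, $(Y^{\perp})^{\perp}$ equals the image of $Y$ under the canonical embedding $X \hookrightarrow X^{**}$, so $\dim (Y^{\perp})^{\perp} = \dim Y$. Either route completes the proof.

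I do not expect a serious obstacle here; the lemma is essentially a bookkeeping exercise combining Lemma \ref{hs-criterion} with a single standard duality identity. The only mildly delicate point is making sure to invoke the embedding $Y^{*} \cong X^{*}/Y^{\perp}$ (or the double-annihilator identity) in the right place, since without it one would be stuck with a statement about $(Y^{\perp})^{\perp}$ rather than about $Y$ itself.
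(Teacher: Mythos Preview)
Your argument is correct, and your Option~2 is essentially the paper's approach: the paper also applies Lemma~\ref{hs-criterion} to $Y^{\perp}\subseteq X^{*}$ and then uses the canonical embedding into the bidual. One small overclaim to fix: the identity $(Y^{\perp})^{\perp}=j(Y)$ is \emph{not} true in general for closed $Y$ (take $Y=X=c_0$, where $(Y^{\perp})^{\perp}=\ell_\infty$); all that holds, and all that is needed, is the containment $j(Y)\subseteq (Y^{\perp})^{\perp}$, which the paper uses for the forward direction. For the reverse direction the paper simply cites an outside reference for ``$Y^{\perp}$ has infinite codimension $\Rightarrow$ $Y$ is infinite dimensional.'' Your Option~1, via the standard isomorphism $X^{*}/Y^{\perp}\cong Y^{*}$, handles both directions at once and is self-contained, so it is arguably cleaner than what the paper does.
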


\begin{proof}
  Suppose $Y$ is a half-space. By Lemma~\ref{hs-criterion}, $Y^\perp$
  must be infinite-dimensional. Also $(Y^\perp)^\perp \supseteq j(Y)$
  where $j\colon X \to X^{**}$ denotes the natural embedding. Thus
  $(Y^\perp)^\perp$ is infinite dimensional. Now
  Lemma~\ref{hs-criterion} yields that $Y^\perp$ is a half-space.

  Let's assume that $Y^\perp$ is a half-space. Since $Y^\perp$ is
  infinite codimensional $Y$ must be infinite-dimensional (see,
  e.g. \cite[Theorem~5.110]{Aliprantis:06}).  On the other hand, since
  $Y^\perp$ is infinite dimensional, by Lemma~\ref{hs-criterion} we
  obtain that $Y$ is of infinite codimension, thus a half-space.
\end{proof}

\begin{remark}
  The statement dual to that of Lemma~\ref{hs-annihilator} is not
  true in general. That is, if $Z$ is a half-space in $X^*$ then $Z_\perp$ need
  not be a half-space. For example, $c_0$ is a half-space in
  $\ell_\infty$ while $(c_0)_\perp=\{0\}\subseteq\ell_1$ is not.
\end{remark}

\begin{proposition}\label{aihs-adjoints}
  Let $T$ be an operator on a Banach space~$X$. If $T$ has an almost
  invariant half-space then so does its adjoint~$T^*$.
\end{proposition}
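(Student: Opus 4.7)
The plan is to take the candidate $H^\perp$ as an almost invariant half-space for $T^*$, where $H$ is an almost invariant half-space for $T$. Lemma~\ref{hs-annihilator} already tells us that $H^\perp$ is a half-space in $X^*$, so the only remaining task is to exhibit a finite-dimensional $G \subseteq X^*$ with $T^*(H^\perp) \subseteq H^\perp + G$.

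To produce such a $G$, I would leverage the perturbation characterization proved in the previous proposition rather than arguing with annihilators directly. First, apply that proposition to obtain a finite rank operator $K \in \mathcal{L}(X)$ such that $(T+K)(H) \subseteq H$. Taking adjoints, observe that for every $\phi \in H^\perp$ and every $h \in H$,
\begin{displaymath}
  \bigl\langle (T^*+K^*)\phi,\, h \bigr\rangle
  = \bigl\langle \phi,\, (T+K)h \bigr\rangle = 0,
\end{displaymath}
since $(T+K)h \in H$. Hence $(T^*+K^*)(H^\perp) \subseteq H^\perp$, i.e.\ $H^\perp$ is invariant under $T^*+K^*$.

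Now $K^*$ is a finite rank operator on $X^*$ (with the same rank as $K$), so $G := \Range(K^*)$ is a finite-dimensional subspace of $X^*$. From $(T^*+K^*)(H^\perp) \subseteq H^\perp$ we get
\begin{displaymath}
  T^*(H^\perp) \subseteq H^\perp - K^*(H^\perp) \subseteq H^\perp + G,
\end{displaymath}
which is exactly the condition that $H^\perp$ is almost invariant under $T^*$. Combined with Lemma~\ref{hs-annihilator}, this shows $H^\perp$ is an almost invariant half-space for $T^*$, completing the proof.

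There is no real obstacle here: the argument is a direct application of the equivalence ``almost invariant $\iff$ invariant under a finite rank perturbation'', together with the elementary fact that the adjoint of a finite rank operator is finite rank, and the half-space duality already established in Lemma~\ref{hs-annihilator}. The only point one must be slightly careful about is that $H^\perp$ is genuinely a half-space of $X^*$ (not merely infinite dimensional), which is precisely what Lemma~\ref{hs-annihilator} provides.
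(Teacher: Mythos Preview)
Your proof is correct and takes a genuinely different route from the paper's. The paper works directly with annihilators: it sets $Z=(Y+F)^\perp$ (rather than $Y^\perp$), observes $T^*Z\subseteq Y^\perp$, and then shows by hand that $Y^\perp=Z+W^\perp$ for a finite-codimensional complement $W\supseteq Y$ of $F$, using biorthogonal functionals to $F$ lying in $W^\perp$. Your argument instead recycles the perturbation characterization from the preceding proposition: pass to a finite-rank $K$ making $H$ genuinely invariant under $T+K$, take adjoints, and read off almost invariance of $H^\perp$ under $T^*$ with defect $\Range(K^*)$. Your approach is shorter and more conceptual, and it makes transparent why the ``defect dimension'' is preserved under passing to the adjoint; the paper's approach is more self-contained (it does not invoke the earlier proposition) and exhibits the defect subspace $W^\perp$ explicitly in terms of the original data $Y$ and $F$.
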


\begin{proof}
  Let $Y$ be a half-space in $X$ such that $Y$ is almost invariant
  under~$T$, and $F$ be a finite-dimensional subspace of $X$ of
  smallest dimension such that $TY\subseteq Y+F$. Then $Y \cap F = \{
  0 \}$. Thus there exists a subspace $W$ of $X$ such that $W+F=X$,
  $W\cap F=\{0\}$, and $Y \subseteq W$.  In particular, $W^\perp$ is
  finite dimensional.  Denote $Z=(Y+F)^\perp$. By
  Lemma~\ref{hs-annihilator}, $Z$ is a half-space in~$X^*$. For every
  $z\in Z$ and $y\in Y$ we have
  \begin{math}
    \langle y,T^*z\rangle=\langle Ty,z\rangle=0
  \end{math}
  since $Ty\in Y+F$. Therefore 
  \begin{math}
    T^*Z\subseteq Y^\perp.
  \end{math}
  To finish the proof, it suffices to show that
  \begin{math}
    Y^\perp= Z+W^\perp.
  \end{math}

  Indeed, by the definition of $Z$ we have that $Z \subseteq Y^\perp$.
  Also since $Y \subseteq W$ we have $W^\perp \subseteq Y^\perp$. Thus
  $Z+W^\perp \subseteq Y^\perp$. On the other hand, since $F$ is
  finite dimensional and $F \cap W = \{ 0 \}$, we may choose a basis
  $(f_i)$ of $F$ with biorthogonal functionals $(f_i^*)$ such that
  $f_i^* \in W^\perp$. Since $Y \subseteq W$ we have that $f_i^* \in
  Y^\perp$. Thus, if $x^*$ is an arbitrary element of $Y^\perp$ then
  $x^* - \sum_i\limits x^*(f_i)f_i^* \in (Y+F)^\perp=Z$, and therefore
  $x^* \in Z+W^\perp$.
\end{proof}

\section{Basic tools}\label{tools}

All Banach spaces in Sections~\ref{tools}, \ref{w-shifts} and
\ref{ext} are assumed to be complex. For a subset $A$ of~$\mathbb C$,
we will write $A^{-1}=\big\{\frac{1}{\lambda}\colon\lambda\in
A,\lambda\ne 0\big\}$.  For a Banach space $X$ and $T \in
{\mathcal L}(X)$, we will use symbols $\sigma(T)$ for the spectrum
of~$T$, $r(T)$ for the spectral radius of~$T$, and $\rho(T)$ for the
resolvent set of~$T$.  For a nonzero vector $e\in X$ and
$\lambda\in \rho(T)^{-1}$, define a vector $h(\lambda,e)$ in $X$ by
\begin{displaymath}
   h(\lambda, e):=\bigl(\lambda^{-1}I-T\bigr)^{-1}(e).
\end{displaymath}
Note that if $\abs{\lambda} < \frac{1}{r(T)}$ then%
\footnote{In case $r(T)=0$ we take $\frac{1}{r(T)}=+\infty$.}
Neumann's formula yields
\begin{equation}
  \label{Neumann}
  h(\lambda,e) = \lambda \sum_{n=0}^\infty \lambda^n T^n.
\end{equation}
Also, observe that $\bigl(\lambda^{-1}I-T\bigr)h(\lambda, e)=e$ for
every $\lambda\in \rho(T)^{-1}$, so that
\begin{equation}
  \label{Th}
  Th(\lambda,e)=\lambda^{-1}h(\lambda,e)-e.
\end{equation}
The last identity immediately yeilds the following result.

\begin{lemma}\label{h_lambda-ai}
  Let $X$ be a Banach space, $T \in {\mathcal L}(X)$, $0\ne e\in X$,
  and $A\subseteq \rho(T)^{-1}$. Put
  \begin{displaymath}
    Y=\overline{\Span}\bigl\{h(\lambda,e)\colon\lambda\in A\bigr\}.
  \end{displaymath}
  Then $Y$ is a $T$-almost invariant subspace (which is not not
  necessarily a half-space), with $TY\subseteq Y + \Span\{e\}$.
\end{lemma}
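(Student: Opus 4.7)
The plan is to reduce the lemma to a direct application of identity~(\ref{Th}). That identity already shows that $T$ sends each generator $h(\lambda,e)$ of $Y$ into $Y+\Span\{e\}$, so the finite dimensional subspace witnessing almost invariance should simply be $F:=\Span\{e\}$.

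First I would verify the conclusion on the dense linear subspace $Y_0:=\Span\bigl\{h(\lambda,e)\colon\lambda\in A\bigr\}$. For any finite linear combination $y=\sum_{i=1}^{n} c_i h(\lambda_i,e)$ with $\lambda_i\in A$, linearity of $T$ together with~(\ref{Th}) yields
\[
  Ty \;=\; \sum_{i=1}^{n} c_i\lambda_i^{-1}\,h(\lambda_i,e) \;-\; \Bigl(\sum_{i=1}^{n} c_i\Bigr)e,
\]
which lies in $Y_0+\Span\{e\}\subseteq Y+\Span\{e\}$. Thus $T(Y_0)\subseteq Y+\Span\{e\}$.

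Next I would pass from $Y_0$ to its closure $Y$ using the continuity of $T$ and the fact that $Y+\Span\{e\}$ is itself closed in $X$, since the sum of a closed subspace and a finite dimensional subspace is always closed. Given $y\in Y$, choose $y_k\in Y_0$ with $y_k\to y$; then $Ty_k\to Ty$ by continuity, and each $Ty_k$ belongs to the closed set $Y+\Span\{e\}$, whence $Ty\in Y+\Span\{e\}$. This verifies the definition of almost invariance (Definition~\ref{AI}) with the finite dimensional subspace $F=\Span\{e\}$.

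I do not expect any serious obstacle: the substance of the lemma is already captured by~(\ref{Th}). The only point requiring a small amount of care is the closedness of $Y+\Span\{e\}$, which is what allows the inclusion established on $Y_0$ to be transferred to the closure. Note that the statement makes no claim that $Y$ is a half-space (depending on $A$, the subspace $Y$ could be finite dimensional or even all of $X$), so no dimension or codimension argument is needed here; the question of choosing $A$ so that $Y$ actually becomes a half-space is presumably what the subsequent sections address.
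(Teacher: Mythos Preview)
Your proof is correct and follows exactly the approach the paper intends: the authors merely note that identity~\eqref{Th} ``immediately yields'' the lemma, and your argument just spells out the routine linearity and closure details behind that remark. There is nothing to add.
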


\begin{remark}
  \textit{The Replacement procedure.} For any nonzero vector $e$ in a Banach
  space~$X$, we have
  \begin{displaymath}
    h(\lambda,e)-h(\mu,e)=(\mu^{-1}-\lambda^{-1})h\big(\lambda,h(\mu,e)\big)
  \end{displaymath}
  whenever $\lambda,\mu \in\rho(T)^{-1}$.

  Indeed, 
  \begin{eqnarray*}
    h(\lambda,e)-h(\mu,e)
    &=&\bigl[(\lambda^{-1}I-T)^{-1}-(\mu^{-1}I-T)^{-1}\bigr](e)\\
    &=&(\mu^{-1}-\lambda^{-1})(\lambda^{-1}I-T)^{-1}(\mu^{-1}I-T)^{-1}(e)\\
    &=&(\mu^{-1}-\lambda^{-1})(\lambda^{-1}I-T)^{-1}h(\mu,e)\\
    &=&(\mu^{-1}-\lambda^{-1})h\big(\lambda,h(\mu,e)\big)
  \end{eqnarray*}
\end{remark}

\begin{lemma}\label{lin-indep}
  Suppose that $T\in\mathcal{L}(X)$ has no eigenvectors. Then,
  for any nonzero vector $e\in X$ the set $\bigl\{h(\lambda,e) \mid
  \lambda \in\rho(T)^{-1}\bigr\}$ is linearly independent.
\end{lemma}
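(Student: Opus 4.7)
The plan is to argue by contradiction: assume we have distinct $\lambda_1,\dots,\lambda_n\in\rho(T)^{-1}$ and scalars $c_1,\dots,c_n$, not all zero, with $\sum_{i=1}^n c_i\,h(\lambda_i,e)=0$, and then produce an eigenvector of $T$.

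First I would clear the resolvents by applying the operator $\prod_{j=1}^n(\lambda_j^{-1}I-T)$ to this relation. Since $(\lambda_i^{-1}I-T)h(\lambda_i,e)=e$ and the factors commute, this yields
\begin{equation*}
  \sum_{i=1}^n c_i\,p_i(T)e = 0, \qquad
  p_i(z):=\prod_{j\ne i}(\lambda_j^{-1}-z).
\end{equation*}
Set $q(z):=\sum_{i=1}^n c_i\,p_i(z)$. Then $q$ is a polynomial of degree at most $n-1$ and $q(T)e=0$. Moreover, for each index $k$,
\begin{equation*}
  q(\lambda_k^{-1}) = c_k\,p_k(\lambda_k^{-1})
   = c_k\prod_{j\ne k}(\lambda_j^{-1}-\lambda_k^{-1}),
\end{equation*}
and since the $\lambda_i$ are distinct, $q(\lambda_k^{-1})\ne0$ for any $k$ with $c_k\ne0$. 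Hence $q$ is not the zero polynomial.

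The final step is to convert the nontrivial polynomial relation $q(T)e=0$ into an eigenvector. Factor $q(z)=c\prod_{k=1}^m(z-\alpha_k)$ over $\mathbb C$ with $c\ne0$. Define $v_0=e$ and $v_k=(T-\alpha_{m-k+1}I)v_{k-1}$ for $k=1,\dots,m$, so that $v_m=c^{-1}q(T)e=0$. Since $v_0=e\ne0$, there is a least index $k\ge 1$ with $v_k=0$; then $v_{k-1}\ne0$ and $(T-\alpha_{m-k+1}I)v_{k-1}=0$, so $v_{k-1}$ is an eigenvector of $T$, contradicting the hypothesis.

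The only real subtlety is the bookkeeping in the first step — checking that applying the commuting product of resolvent factors to a vanishing sum of $h(\lambda_i,e)$ really produces the stated polynomial identity $\sum c_i p_i(T)e=0$ — but once that is in place the rest is routine, and I do not anticipate a serious obstacle.
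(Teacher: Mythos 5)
Your proof is correct, and it takes a genuinely different route from the paper. The paper proceeds by induction on $n$: given a vanishing combination $\sum_k a_k h(\lambda_k,e)=0$, it applies $T$ and the identity $Th(\lambda,e)=\lambda^{-1}h(\lambda,e)-e$ to conclude that $\sum_k a_k=0$ (else $e$ would lie in the finite-dimensional span of the $h(\lambda_k,e)$, which would then be $T$-invariant and yield an eigenvalue); it then invokes what it calls the Replacement Procedure, the resolvent-type identity $h(\lambda,e)-h(\mu,e)=(\mu^{-1}-\lambda^{-1})h\bigl(\lambda,h(\mu,e)\bigr)$, to rewrite the relation as a vanishing combination of $n-1$ vectors of the same form (with $e$ replaced by $h(\lambda_1,e)$), closing the induction. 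You instead kill all the resolvents at once by applying $\prod_j(\lambda_j^{-1}I-T)$, which converts the relation into a polynomial identity $q(T)e=0$ with $\deg q\le n-1$; you show $q\ne0$ by evaluating at $\lambda_k^{-1}$, and then factor $q$ over $\mathbb C$ to produce an eigenvector directly. Your argument is non-inductive, avoids the Replacement Procedure entirely, and isolates the standard lemma ``$q(T)e=0$ with $q\ne0$ forces an eigenvector,'' which makes the use of the no-eigenvector hypothesis more transparent. The paper's inductive argument, on the other hand, showcases the Replacement Procedure, which the authors evidently regard as a reusable tool and which they state as a separate remark; your route does not need it. Both are complete and correct.
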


\begin{proof}
  We are going to use induction on $n$ to show that for any nonzero
  vector $e\in X$ and any distinct $\lambda_1, \lambda_2, \dots,
  \lambda_n\in\rho(T)^{-1}$ the set
  $$\bigl\{h(\lambda_1,e), h(\lambda_2,e),\dots, h(\lambda_n,e)\bigr\}$$
  is linearly independent. The statement is clearly true for $n=1$; we
  assume it is true for $n-1$ and will prove it for~$n$.

  Fix $e\in X$ and distinct $\lambda_1, \lambda_2, \dots,
  \lambda_n\in\rho(T)^{-1}$.  Let $a_1, a_2, \dots, a_n$ be scalars
  such that $\sum_{k=1}^{n}a_k h(\lambda_k,e)=0$. It follows
  from~\eqref{Th} that
  \[
    0=T\biggl(\sum_{k=1}^{n}a_k h(\lambda_k,e)\biggr)
     =\sum_{k=1}^n a_k\lambda_k^{-1}h(\lambda_k,e)-\sum_{k=1}^{n}a_k e.
  \]
  If $\sum_{k=1}^{n}a_k\ne 0$ then 
  $e\in\Span\big\{h(\lambda_k,e)\big\}_{k=1}^{n}$, so that
  $\Span\bigl\{h(\lambda_k,e)\bigr\}_{k=1}^{n}$ is $T$-invariant by~\eqref{Th}.
  This subspace is finite-dimensional, so that
  $T$ has an eigenvalue, which is a contradiction.
  Therefore $\sum_{k=1}^{n}a_k=0$, so that $a_1=-\sum_{k=2}^na_k$.

  Using the Replacement Procedure we obtain
  \begin{eqnarray*}
     0=\sum_{k=1}^{n}a_k h(\lambda_k,e)&=&
     \Big(-\sum_{k=2}^{n}a_k\Big)h(\lambda_1,e)
        +\sum_{k=2}^{n}a_k h(\lambda_k,e)\\
     &=&\sum_{k=2}^{n}a_k\big(h(\lambda_k,e)-h(\lambda_1,e)\big)\\
     &=&\sum_{k=2}^{n}a_k(\lambda_1^{-1}-\lambda_k^{-1})h
            \bigl(\lambda_k,h(\lambda_1,e)\bigr).
  \end{eqnarray*}
  By the induction hypothesis, the set
  $\Bigl\{h\bigl(\lambda_k,h(\lambda_1,e)\bigr)\Bigr\}_{k=2}^{n}$ is
  linearly independent, hence $a_k(\lambda_1^{-1}-\lambda_k^{-1})=0$
  for any $2\le k\le n$. It follows immediately that $a_k=0$ for any
  $1\le k\le n$, and this concludes the proof.
\end{proof}

This gives us a natural way to try to construct almost invariant
half-spaces. Indeed, suppose that $T$ has no eigenvectors. Let $e\in
X$ such that $e\ne 0$, and let $(\lambda_n)$ be a sequence of distinct
elements of $\rho(T)^{-1}$. Put
$Y=\bigl[h(\lambda_n,e)\bigr]_{n=1}^\infty$. Then $Y$ is almost
invariant by Lemma~\ref{h_lambda-ai} and infinite-dimensional by
Lemma~\ref{lin-indep}. However, the difficult part is to show that $Y$
is infinite codimensional.  Even passing to subsequences might not
help, as there are sequences whose every subsequence spans a dense
subspace. For example, let $x=(1,1,\frac{1}{2!},\frac{1}{3!},\dots)$
in $X=\ell_p$ with $1\le p<\infty$, and put $x_n=S^nx$ where $S$ is
the backward shift operator. Let $Y$ be the closed subspace spanned by
a subsequence of $(x_n)$. We claim that $Y=X$. Indeed, $n!x_n\to e_1$,
so that $e_1\in Y$.  Let $y_n=n!x_n-e_1$; it is easy to see that
$(n+1)!y_n\to e_2$, so that $e_2\in Y$.  Proceding inductively, we
obtain that $e_i\in Y$ for every~$i$, hence $Y=X$.

\section{Weighted shift operators}\label{w-shifts}

In this section we give a sufficient condition for a quasinilpotent
operator to have almost invariant half-spaces (Theorem~\ref{main}). As
an application, we show in Corollary~\ref{aihs-shifts} that
quasinilpotent weighted shifts on $\ell_p$ or $c_0$ have invariant
half-spaces. In particular, every Donoghue operator has an almost
invariant half-space.

Recall that a sequence $(x_i)$ in a Banach space is called
\term{minimal} if $x_k\notin[x_i]_{i\ne k}$ for every~$k$, (see also
\cite[section 1.f]{Lindenstrauss:77}).  It is easy to see that this is
equivalent to saying that for every $k$, the biorthogonal functional
$x_k^*$ defined on $\Span\{x_i\}$ by
$x_k^*\bigl(\sum_{i=0}^n\alpha_ix_i\bigr)=\alpha_k$ is bounded.

We will use the following numerical lemma.

\begin{lemma}\label{converge}
  Given a sequence $(r_i)$ of positive reals, there exists a sequence
  $(c_i)$ of positive reals such that the series $\sum_{i=0}^\infty
  c_ir_{i+k}$ converges for every~$k$.
\end{lemma}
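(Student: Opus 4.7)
The plan is to carry out a diagonal-type construction: make $c_i$ shrink fast enough that, for each fixed $k$, all but finitely many terms of the series $\sum_i c_i r_{i+k}$ are dominated by a geometric series. The key observation is that for a given $k$, the inequality $i+k \le 2i$ holds whenever $i \ge k$, so if I control $c_i$ using the maximum of $r_j$ over $j \le 2i$, then I automatically control $c_i r_{i+k}$ for every $k \le i$.

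More concretely, I would set
\[
  M_i = \max\{r_0, r_1, \dots, r_{2i}\} \quad\text{and}\quad c_i = \frac{1}{2^i M_i},
\]
both of which are strictly positive since each $r_j > 0$. Then, for any fixed $k \ge 0$ and any $i \ge k$, we have $i+k \le 2i$, so $r_{i+k} \le M_i$ and therefore $c_i r_{i+k} \le 2^{-i}$.

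Fix $k$ and split the sum:
\[
  \sum_{i=0}^\infty c_i r_{i+k}
  = \sum_{i=0}^{k-1} c_i r_{i+k} + \sum_{i=k}^\infty c_i r_{i+k}.
\]
The first piece is a finite sum of finite terms, and the second piece is dominated by $\sum_{i=k}^\infty 2^{-i} = 2^{1-k}$. Hence the full series converges for every $k$, which is what is required.

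There is no real obstacle here: the lemma is purely a diagonal construction, and the only point to highlight is the choice of the window $\{0,1,\dots,2i\}$ (rather than $\{0,1,\dots,i\}$) in the definition of $M_i$, which is exactly what is needed so that the shifted index $i+k$ still lies in the window whenever $i \ge k$.
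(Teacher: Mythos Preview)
Your proof is correct and is essentially identical to the paper's: the paper sets $c_i=\tfrac{1}{2^i}\min\{\tfrac{1}{r_1},\dots,\tfrac{1}{r_{2i}}\}$, which (up to the harmless inclusion of $r_0$) is exactly your $c_i=\tfrac{1}{2^i M_i}$, and then uses the same observation that $i+k\le 2i$ for $i\ge k$ to bound the tail by a geometric series.
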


\begin{proof}
  For every $i$ take
  $c_i=\frac{1}{2^i}\min\{\frac{1}{r_1},\dots,\frac{1}{r_{2i}}\}$. For
  every $i\ge k$ we have $k+i\le 2i$, so that
  $c_ir_{i+k}\le\frac{1}{2^i}$. It follows that
  \begin{displaymath}
    \sum_{i=0}^\infty c_ir_{i+k}\le
    \sum_{i=0}^{k-1} c_ir_{i+k}+
    \sum_{i=k}^\infty\tfrac{1}{2^i}<+\infty.
  \end{displaymath}
\end{proof}

\begin{theorem}\label{main}
  Let $X$ be a Banach space and $T \in {\mathcal L}(X)$ satisfying the
  following conditions:
  \begin{enumerate}
  \item $T$ has no eigenvalues.
  \item\label{rho} The unbounded component of $\rho(T)$ contains
      $\{z\in\mathbb C\mid 0<\abs{z}<\varepsilon\}$ for some $\varepsilon>0$.
  \item There is a vector whose orbit is a minimal sequence.
  \end{enumerate}
  Then $T$ has an almost invariant half-space.
\end{theorem}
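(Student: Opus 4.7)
The overall plan is to apply the framework of Section~\ref{tools}: set $Y:=\overline{\Span}\{h(\lambda_n,e)\colon n\ge 1\}$ for an appropriately chosen sequence $(\lambda_n)$ in the unbounded component $U^{-1}$ of $\rho(T)^{-1}$, where $e$ is the vector whose orbit is minimal, supplied by~(iii). Lemma~\ref{h_lambda-ai} gives $T$-almost invariance of $Y$ for free, and hypothesis~(i) combined with Lemma~\ref{lin-indep} makes $Y$ infinite dimensional automatically. The entire work goes into arranging that $Y$ has infinite codimension as well.

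To exhibit infinite codimension I plan to construct an infinite linearly independent family $\{\Phi_i\}_{i\ge 0}\subseteq X^*$ sitting inside $Y^\perp$. Let $(\phi_k)_{k\ge 0}$ be the biorthogonal functionals to the minimal sequence $(T^ke)$, extended to elements of $X^*$ by Hahn-Banach, and set $r_k:=\|\phi_k\|$. Applying Lemma~\ref{converge} to $(r_k)$ produces positive reals $(c_k)$ with $\sum_k c_k r_{k+i}<\infty$ for every~$i\ge 0$. Then for any scalars $(a_k)$ satisfying $|a_k|\le c_k$, the formal series $\Phi_i:=\sum_k a_k\phi_{k+i}$ converges in $X^*$ for every~$i$.

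The Neumann expansion near $0$ yields $\phi_k(h(\lambda,e))=\lambda^{k+1}$, so by analytic continuation on the connected domain $U^{-1}\cup\{0\}$ on which $\lambda\mapsto\phi(h(\lambda,e))$ is holomorphic, one obtains $\Phi_i(h(\lambda,e))=\lambda^{i+1}g(\lambda)$ where $g(\lambda):=\sum_k a_k\lambda^k$. Consequently, if $(\lambda_n)$ is taken to be a sequence of distinct nonzero zeros of $g$ lying in $U^{-1}$, then every $\Phi_i$ annihilates every $h(\lambda_n,e)$ and therefore belongs to $Y^\perp$. Linear independence of the $\Phi_i$ follows at once from the triangular identity $\Phi_i(T^je)=a_{j-i}$ (with $a_{j-i}=0$ whenever $j<i$) provided $a_0\ne 0$, whence $Y^\perp$ is infinite dimensional and $Y$ is infinite codimensional by Lemma~\ref{hs-criterion}.

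The main obstacle is therefore the construction of the coefficient sequence $(a_k)$: one must produce $(a_k)$ with $|a_k|\le c_k$, $a_0\ne 0$, and such that the resulting power series $g$ defines an entire function possessing infinitely many zeros. A nonzero entire function has only finitely many zeros on each bounded set, so infinitely many of the zeros will automatically lie in $\{|\lambda|>1/\varepsilon\}\subseteq U^{-1}$ by~(ii); thus only the existence of infinitely many zeros at all needs to be secured. I would realize $g$ as a Weierstrass product $a_0\prod_n(1-\lambda/\lambda_n)$, choosing the $\lambda_n\in U^{-1}$ inductively with $|\lambda_n|$ large enough at each step to keep the Taylor coefficients of the partial products below $c_k$ in the limit. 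Verifying that this inductive choice can always be completed in the face of the possibly very rapid decay of $(c_k)$ forced by Lemma~\ref{converge} when $r_k$ grows quickly is the delicate technical point of the whole argument.
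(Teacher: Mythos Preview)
Your framework is exactly the paper's: same vectors $h(\lambda_n,e)$, same functionals $\Phi_i=\sum_k a_k\phi_{k+i}$ (the paper writes these as $f_k$ with $a_j=c_j$), same analytic-continuation argument to identify $\Phi_i(h(\lambda,e))$ with $\lambda^{i+1}g(\lambda)$, and the same triangular computation for linear independence. So everything up to and including your identification of ``the main obstacle'' is correct and coincides with the paper.

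The gap is precisely the part you flag as delicate and do not carry out: producing $(a_k)$ with $|a_k|\le c_k$, $a_0\ne 0$, and $g$ entire with infinitely many zeros. Your inductive Weierstrass-product scheme may be workable, but it is considerably harder than necessary, and as written it is not a proof. The paper dispatches this step in two lines, and this is the idea you are missing: first shrink the $c_k$ from Lemma~\ref{converge} further so that $\sqrt[k]{c_k}\to 0$ (harmless for the convergence of $\sum c_k r_{k+i}$), and simply take $a_k=c_k$. Then $g(z)=\sum c_kz^k$ is entire and, since all $c_k>0$, not a polynomial; by Picard's theorem $g$ omits at most one value infinitely often, so there is a negative real $d$ with $g^{-1}(\{d\})$ infinite. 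Replacing $c_0$ by $c_0-d$ (still positive, and changing each $\beta_i$ by only a finite amount) makes $g$ itself have infinitely many zeros $(\lambda_n)$, necessarily with $|\lambda_n|\to\infty$, hence eventually in $\{|z|>1/\varepsilon\}\subseteq U^{-1}$. No inductive construction is needed.
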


\begin{proof}
  Let $e\in X$ be such that $(T^ie)_{i=0}^\infty$ is minimal. For each
  $i$ put $x_i=T^ie$.  Then for each~$k$, the biorthogonal functional
  $x_k^*$ defined on $\Span x_i$ by
  $x_k^*\bigl(\sum_{i=0}^n\alpha_ix_i\bigr)=\alpha_k$ is bounded. Let
  $r_i=\norm{x_k^*}$. Let $(c_i)$ be a sequence of positive real
  numbers as in Lemma~\ref{converge}, so that
  $\beta_k:=\sum_{i=0}^\infty c_ir_{i+k}<+\infty$ for every~$k$. By
  making $c_i$'s even smaller, if necessary, we may assume that
  $\sqrt[i]{c_i}\to 0$.
  
  Consider a function $F:\mathbb C\to\mathbb C$ defined by
  $F(z)=\sum_{i=0}^\infty c_iz^i$. Evidently, $F$ is entire.
  Observe that we may assume that the set 
  $\bigl\{z\in\mathbb C\mid F(z)=0\bigr\}$ is infinite.
  Indeed, by Picard Theorem there exists a negative real number $d$
  such that the set $\bigl\{z\in\mathbb C\mid F(z)=d\bigr\}$ is
  infinite. Now replace $c_0$ with $c_0-d$. This doesn't affect our other
  assumptions on the sequence $(c_i)$.

  Fix a sequence of distinct complex numbers $(\lambda_n)$ such that
  $F(\lambda_n)=0$ for every~$n$. Since $F$ is non-constant, the
  sequence $(\lambda_n)$ has no accumulation points. Hence,
  $\abs{\lambda_n}\to +\infty$. 
  
  Note that~\eqref{rho} can be restated as follows: $\rho(T)^{-1}$ has
  a connected component ${\mathcal C}$ such that $0 \in
  \overline{\mathcal C}$ and ${\mathcal C}$ contains a neighbourhood
  of~$\infty$.  Thus by passing to a subsequence of $\lambda_n$'s and
  relabeling, if necessary, we can assume that $\lambda_n \in
  {\mathcal C}$ for all~$n$.

  Observe that the condition $\lambda_n\in\rho(T)^{-1}$ for every $n$
  implies that $h(\lambda_n,e)$ is defined for each~$n$. Put
  $Y=[h(\lambda_n,e)]_{n=1}^\infty$.
  Then $Y$ is almost invariant under $T$ by Lemma~\ref{h_lambda-ai}
  and $\dim Y=\infty$ by Lemma~\ref{lin-indep}.
  We will prove that $Y$ is actually a half-space
  by constructing a sequence of linearly independent functionals $(f_n)$
  such that every $f_n$ annihilates~$Y$.

  For every $k=0,1,\dots$, put $F_k(z)=z^kF(z)$. Let's write $F_k(z)$ in 
  the form of Taylor series, $F_k(z)=\sum_{i=0}^\infty c^{(k)}_iz^i$. Then
  \begin{displaymath}
    c^{(k)}_i=
    \begin{cases}
      0 & \mbox{if $i<k$, and}\\
      c_{i-k} & \mbox{if }i\ge k.
    \end{cases}
  \end{displaymath}

  Define a functional $f_k$ on $\Span\{T^ie\}_{i=0}^\infty$ via
  $f_k(T^ie)=c^{(k)}_i$. Since $T$ has no eigenvalues, the orbit of $T$ is linearly
  independent thus $f_k$ is well-defined.
  We will show now that $f_k$ is bounded. Let
  $x\in\Span\{T^ie\}_{i=0}^\infty$, then $x=\sum_{i=0}^nx^*_i(x)T^ie$ for some~$n$,
  so that
  \begin{multline*}
    \abs{f_k(x)}=
    \Bigabs{f_k\Bigl(\sum_{i=0}^nx^*_i(x)T^ie\Bigr)}\le
    \Bigl(\sum_{i=0}^n\norm{x^*_i}c^{(k)}_i\Bigr)\norm{x}\\=
    \Bigl(\sum_{i=k}^{n} r_i c_{i-k}\Bigr)\norm{x}\le
    \Bigl(\sum_{i=k}^\infty r_i c_{i-k}\Bigr)\norm{x}=
    \beta_k\norm{x},
  \end{multline*}
  so that $\norm{f_k}\le\beta_k$.  Hence, $f_k$ can be extended by
  continuity to a bounded functional on $[T^ie]_{i=1}^\infty$, and
  then by Hahn-Banach to a bounded functional on all of~$X$.

  Now we show that each $f_k$ annihilates~$Y$. Fix~$k$. Recall that for each 
  $\lambda\in \rho(T)^{-1}$ such that
  $\abs{\lambda}<\frac{1}{r(T)}$  we have 
  $h(\lambda,e)=\lambda\sum\limits_{i=0}^{\infty}\lambda^iT^ie$. 
  Therefore
  \begin{displaymath}
    f_k\bigl(h(\lambda,e)\bigr)=
    f_k\Bigl(\lambda\sum_{i=0}^\infty\lambda^iT^ie\Bigr)=
    \lambda\sum_{i=0}^\infty\lambda^ic^{(k)}_i=
    \lambda F_k(\lambda)=\lambda^{k+1}F(\lambda).
  \end{displaymath}
  for every $\lambda\in {\mathcal C}$ such that $\abs{\lambda} < \frac{1}{r(T)}$ 
  (recall $0 \in \overline{\mathcal C}$). The map
  $\lambda\mapsto h(\lambda,e)$ and, therefore, the map $\lambda\mapsto
  f_k\bigl(h(\lambda,e)\bigr)$, is analytic on the set 
  $\rho(T)^{-1}$. 
  Therefore, by the principle of uniqueness of analytic function,
  the functions $f_k\bigl(h(\lambda,e)\bigr)$ and $\lambda^{k+1}F(\lambda)$ must 
  agree on~${\mathcal C}$.
  Since $\lambda_n \in {\mathcal C}$ for all~$n$, we have
  $f_k\bigl(h(\lambda_n,e)\bigr)=\lambda_n^{k+1}F(\lambda_n)=0$ for
  all~$n$. Thus, $Y$ is annihilated by every~$f_k$.
  
  It is left to prove the linear independence of $\{f_k\}_{k=1}^\infty$. 
  Observe that $f_k\ne 0$ for all $k$ since $f_k(T^ie)\ne 0$ for $i\ge k$.
  Suppose that $f_N=\sum_{k=M}^{N-1}a_kf_k$ with $a_M\ne 0$. However
  $f_N(T^Me)=0$ by definition of $f_N$ while 
  $\sum_{k=M}^{N-1}a_kf_k(T^Me)=a_Mc_0\ne 0$, contradiction.
\end{proof}

\begin{remark}\label{q-nilp}
  Note that condition~\eqref{rho} of Theorem~\ref{main} is satisfied
  by many important classes of operators. For example, it is satisfied
  if $\sigma(T)$ is finite (in particular, if $T$ is quasinilpotent)
  or if $0$ belongs in the unbounded component of $\rho(T)$.
\end{remark}

\begin{corollary}\label{aihs-shifts}
  If $X= \ell_p$ ($1 \le p < \infty$) or $c_0$ and $T \in
  {\mathcal L}(X)$, is a weighted right shift operator with weights
  converging to zero then both $T$ and $T^*$ have almost invariant
  half-spaces.
\end{corollary}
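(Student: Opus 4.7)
The plan is to deduce this directly from Theorem~\ref{main} together with Proposition~\ref{aihs-adjoints}. Write $Te_i = w_i e_{i+1}$, where $(e_i)_{i=0}^\infty$ is the standard basis of $X$, and assume (as is standard for weighted shifts) that the weights $w_i$ are nonzero with $w_i\to0$. It suffices to verify the three hypotheses of Theorem~\ref{main} for $T$; the statement for $T^*$ will then follow from Proposition~\ref{aihs-adjoints}.

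First, I would check that $T$ is quasinilpotent. For any $n$, $T^n e_i = (w_i w_{i+1}\cdots w_{i+n-1})e_{i+n}$, so $\norm{T^n}\le\sup_i\abs{w_iw_{i+1}\cdots w_{i+n-1}}$. Since $w_i\to 0$, standard estimates give $\norm{T^n}^{1/n}\to 0$, hence $\sigma(T)=\{0\}$. This immediately yields condition~\eqref{rho} of Theorem~\ref{main} (as noted in Remark~\ref{q-nilp}): $\rho(T)=\mathbb C\setminus\{0\}$ is connected and contains a punctured neighbourhood of~$0$.

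Next, $T$ has no eigenvalues. If $Tx=\lambda x$ with $x=\sum a_i e_i$, then comparing coefficients gives $\lambda a_0=0$ and $w_{i}a_{i}=\lambda a_{i+1}$ for all $i\ge 0$. If $\lambda=0$ then since all $w_i\ne 0$ we get $a_i=0$ for all~$i$; if $\lambda\ne 0$ then $a_0=0$ and the recursion forces $a_i=0$ for all~$i$. Either way $x=0$, so condition~(i) holds.

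Finally, the orbit of $e_0$ is a minimal sequence: by direct computation, $T^i e_0=(w_0 w_1\cdots w_{i-1})e_i$ for $i\ge 1$, so $(T^i e_0)_{i\ge0}$ is a sequence of nonzero scalar multiples of the basis vectors $(e_i)$. Since $(e_i)$ is a Schauder basis of $X$, it is minimal, and rescaling by nonzero scalars preserves minimality; this gives condition~(iii). Theorem~\ref{main} therefore produces an almost invariant half-space for $T$, and Proposition~\ref{aihs-adjoints} yields one for $T^*$. I do not expect any real obstacle here; the only mild point is ensuring minimality of the rescaled orbit, which is handled by the boundedness of the biorthogonal functionals of $(e_i)$ together with the fact that no weight vanishes.
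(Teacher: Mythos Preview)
Your proof is correct and follows exactly the same route as the paper: verify that $T$ is quasinilpotent, has no eigenvalues, and that the orbit of the first basis vector is minimal, then invoke Theorem~\ref{main} (via Remark~\ref{q-nilp}) and Proposition~\ref{aihs-adjoints}. The paper's own proof simply asserts each of these three facts without the computations you supply, so your version is a fleshed-out form of the same argument.
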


\begin{proof}
  It can be easily verified that $T$ is quasinilpotent.  Clearly, $T$
  has no eigenvalues, and the orbit of $e_1$ is evidently a minimal
  sequence. By Theorem~\ref{main} and Remark~\ref{q-nilp}, $T$ has
  almost invariant half-spaces. Finally,
  Proposition~\ref{aihs-adjoints} yields almost invariant half-spaces
  for~$T^*$.
\end{proof}

The following statement is a special case of
Corollary~\ref{aihs-shifts}.

\begin{corollary}
  If $D$ is a Donoghue operator then both $D$ and $D^*$ have almost
  invariant half-spaces.
\end{corollary}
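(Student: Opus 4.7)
The plan is to recognize $D$ as the adjoint of a weighted right shift on $\ell_2$ and then invoke Corollary~\ref{aihs-shifts}.

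In the standard basis $(e_i)_{i\ge 0}$ of $\ell_2$, the relations $De_0=0$ and $De_i=w_ie_{i-1}$ give, by a direct computation with the inner product,
\begin{displaymath}
  D^*e_j=\overline{w_{j+1}}\,e_{j+1}\qquad(j\ge 0).
\end{displaymath}
So $D^*$ is a weighted right shift on $\ell_2$ whose weight sequence $(\overline{w_{j+1}})$ converges to $0$, because $(\abs{w_i})$ belongs to $\ell_2$ and in particular tends to~$0$. Thus $T:=D^*$ meets the hypothesis of Corollary~\ref{aihs-shifts}, which then delivers almost invariant half-spaces for both $T=D^*$ and $T^*$. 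By reflexivity of $\ell_2$, $T^*=D^{**}=D$, so $D$ has an almost invariant half-space as well.

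There is essentially no obstacle beyond this identification; the whole content has already been established in Corollary~\ref{aihs-shifts}. I would however point out that the corollary cannot be applied to $D$ itself: $D$ is a \emph{backward} weighted shift and moreover has $e_0$ as an eigenvector with eigenvalue $0$, so the no-eigenvalues hypothesis of Theorem~\ref{main} fails for $D$ directly. The detour through $D^*$ is precisely what makes this work, and Proposition~\ref{aihs-adjoints} (already packaged inside Corollary~\ref{aihs-shifts}) then hands us back the conclusion for $D$.
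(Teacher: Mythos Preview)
Your proposal is correct and is exactly the argument the paper has in mind: the paper gives no proof beyond declaring the corollary a special case of Corollary~\ref{aihs-shifts}, and your identification of $D^*$ as the weighted right shift (together with reflexivity of $\ell_2$) is precisely how one reads it as such a special case. Your remark that Corollary~\ref{aihs-shifts} cannot be applied to $D$ directly, since $D$ is a backward shift with eigenvector $e_0$, is a useful clarification that the paper leaves implicit.
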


Recall that a subset ${\mathcal D}$ of ${\mathbb C}$ is called a cone
if ${\mathcal D}$ is closed under addition and multiplication by
positive scalars.

\begin{remark}
  Condition~\eqref{rho} in Theorem~\ref{main} can be
  weakened as follows: instead of requiring that $\rho(T)$ contains a
  punctured disk centered at zero, we may only require that it
  contains a non-trivial sector of this disk, i.e., the intersection
  of the punctured disk with a non-empty open cone. Equivalently, 
  $\rho(T)^{-1}$ has a connected component $\mathcal C$ such that
  $0\in\overline{\mathcal C}$, and there exists an
  open cone ${\mathcal D}$ in ${\mathbb C}$ and $M>0$ such that
  $\bigl\{ z \in {\mathcal D}\mid \abs{z} \ge M \bigr\} \subseteq
  {\mathcal C}$.  Indeed, suppose that such $\mathcal D$ and $M$
  exist. Choose $\nu$ in this cone with $\abs{\nu}=M$.  Also, suppose
  that, as in the proof of Theorem~\ref{main}, we have already found a
  sequence $(\lambda_m)$ of zeros of~$F$. The set $\{M
  \frac{\lambda_m}{\abs{\lambda_m}}\}_{m=0}^\infty$ has an
  accumulation point, say~$\mu$. By passing to a subsequence, we may
  assume that $M \frac{\lambda_m}{\abs{\lambda_m}}\to\mu$.  Note that
  the spectrum of $\frac{\mu}{\nu}T$ is obtained by rotating the
  spectrum of~$T$. Thus $\rho\left(\frac{\mu}{\nu}T\right)^{-1}$ has a
  connected component ${\mathcal C}'$ such that $0 \in
  \overline{{\mathcal C}'}$ and there exists an open cone
  ${\mathcal D}'$ in ${\mathbb C}$ which contains a neighborhood of
  $\mu$ and $\bigl\{ z \in {\mathcal D}'\mid \abs{z} \ge M\bigr\} \subseteq
  {\mathcal C}'$. Thus by passing to a subsequence of $(\lambda_m)$ we
  can assume that $\lambda_m \in {\mathcal C}'$ for all~$m$.  Replace
  in the proof $T$ with $\frac{\mu}{\nu}T$. Note that this doesn't
  affect the assumptions on the operator and the definitions of
  $c_i$'s, $F$, and $\lambda_m$'s.  Finally, multiplying an operator
  by a non-zero number does not affect its almost invariant
  half-spaces.
\end{remark}

Note that every operator $T$ with $\sigma(T)\subseteq\mathbb R$
satisfies this weaker version of~condition~\eqref{rho}. In particular,
it is satisfied by self-adjoint operators on Hilbert spaces.

\begin{corollary}
  Suppose that $T\in\mathcal L(X)$ such that $T$ has no eigenvectors,
  $\sigma(T)\subseteq\mathbb R$, and there is a vector whose orbit is
  a minimal sequence. Then $T$ has an almost invariant half-space.
\end{corollary}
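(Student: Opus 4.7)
The plan is to deduce the corollary from the weakened version of Theorem~\ref{main} given in the preceding remark. That version requires (i)~no eigenvalues; the weakened spectral condition that $\rho(T)^{-1}$ have a connected component $\mathcal C$ with $0\in\overline{\mathcal C}$ and containing $\{z\in\mathcal D\mid\abs{z}\ge M\}$ for some open cone $\mathcal D\subseteq\mathbb C$ and some $M>0$; and (iii)~a vector with minimal orbit. Conditions (i) and (iii) are among the hypotheses of the corollary, so only the weakened spectral condition needs to be derived from $\sigma(T)\subseteq\mathbb R$.

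To verify it, I would first observe that $\sigma(T)$ is always compact, so under our hypothesis $\sigma(T)$ is a compact subset of the real line. Hence $\rho(T)=\mathbb C\setminus\sigma(T)$ is connected: any two of its points can be joined by a path that detours through the open upper half-plane, which lies entirely in $\rho(T)$. Removing the single point $0$ cannot disconnect an open connected subset of the plane, so $\rho(T)\setminus\{0\}$ is connected; applying the continuous map $z\mapsto z^{-1}$ then shows that $\rho(T)^{-1}$ is connected. Consequently $\rho(T)^{-1}$ is itself the only connected component of itself, and I take $\mathcal C=\rho(T)^{-1}$.

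Next, I choose $\mathcal D=\{z\in\mathbb C\mid\Im z>0\}$, the open upper half-plane. It is open and closed under addition and positive-scalar multiplication, hence an open cone in the paper's sense. Since $z\mapsto z^{-1}$ swaps the upper and lower half-planes and the lower half-plane lies in $\rho(T)$, we have $\mathcal D\subseteq\rho(T)^{-1}=\mathcal C$, and in particular $\{z\in\mathcal D\mid\abs{z}\ge 1\}\subseteq\mathcal C$. Finally, the sequence $-i/n = 1/(in)$ lies in $\mathcal C$ for every $n$ and tends to $0$, so $0\in\overline{\mathcal C}$. The weakened form of condition~(ii) is therefore satisfied, and the weakened Theorem~\ref{main} supplies the almost invariant half-space.

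I do not expect a genuine obstacle. The only mild care is in the elementary planar topology behind the connectedness of $\rho(T)\setminus\{0\}$ and of its image under inversion, and in noting that the open upper half-plane qualifies as an open cone in the sense defined just before the remark.
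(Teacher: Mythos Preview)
Your argument is correct and follows the paper's intended route: verify that $\sigma(T)\subseteq\mathbb R$ implies the weakened spectral condition in the preceding remark, then invoke the strengthened form of Theorem~\ref{main}. You work slightly harder than necessary by checking the second (equivalent) formulation and proving full connectedness of $\rho(T)^{-1}$; it would have sufficed to note that the open upper half-plane lies in $\rho(T)$ and is itself a sector of every punctured disk at~$0$, but your version is entirely valid.
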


\section{Non-quasinilpotent operators} \label{ext}

In this section we will modify the argument of Theorem~\ref{main} to extend
its statement to another class of operators having non-zero spectral
radius. It is a standard fact that if $(x_i)$ is a minimal sequence
then
\begin{math}
  \frac{1}{\norm{x_n^*}}=\dist\bigl(x_n,[x_i]_{i\ne n}\bigr)
\end{math}
for every~$n$.

\begin{theorem}\label{extension}
  Let $X$ be a Banach space and $T\in {\mathcal L}(X)$ be an operator
  with $r(T)\le 1$
  having no eigenvectors. Let $e\in X$; put $x_n=T^ne$ for $n\in\mathbb
  N$. If $(x_n)$ is a minimal sequence and
  $\sum_{n=1}^\infty\frac{\norm{x_n^*}}{n}<\infty$ then $T$ has an
  almost invariant half-space.
\end{theorem}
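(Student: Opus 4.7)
The plan is to replay the argument of Theorem~\ref{main} with a different coefficient budget and a disk-analytic (rather than entire) function $F$. Since we only have $r(T) \le 1$ in place of quasinilpotence, Neumann's formula~\eqref{Neumann} now converges only on $\{\abs{\lambda} < 1/r(T)\}$, and the connected component $\mathcal{C}$ of $\rho(T)^{-1}$ containing $0$ in its closure need not extend past the closed unit disk. Consequently the function $F$ built from the coefficients $(c_i)$ will only be analytic on an open disk, and its zeros must now lie inside, rather than outside, a bounded region.

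Set $r_i := \norm{x_i^*}$. The hypothesis $\sum r_n/n < \infty$ allows a much larger choice of coefficients than Lemma~\ref{converge} would provide: taking $c_i$ of order $1/(i+1)$, the bound $\frac{1}{i+1} \le \frac{k+1}{i+k+1}$ gives
\begin{equation*}
  \sum_{i \ge 0} c_i r_{i+k} \;\le\; (k+1)\sum_{n \ge k}\frac{r_n}{n+1} \;<\; \infty
\end{equation*}
for every $k$, which is exactly the budget needed later to bound $\norm{f_k}$. With such $c_i$, the power series $F(z) = \sum c_i z^i$ has radius of convergence at least $1$, so $F$ is analytic on the open unit disk $\mathbb{D}$, which sits inside $\mathcal{C}$ (minus the point $0$) by the previous paragraph.

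The main obstacle is to refine the choice of the $c_i$'s so that $F$ has infinitely many zeros in $\mathbb{D}$. In Theorem~\ref{main}, the entire $F$ combined with Picard's theorem supplied the zeros after a shift of $c_0$; since $F$ is now only disk-analytic, that argument is unavailable. I would achieve infinitely many zeros either by arranging $F$ to have an essential singularity at a point on the unit circle -- for example, by adding a small perturbation of the form $c \cdot \exp(1/(1-z))$ whose Taylor coefficients can be absorbed into the coefficient budget -- and then applying Great Picard in a neighborhood of that point after a further constant shift; or alternatively by constructing $F$ directly from a known disk-analytic function with infinitely many zeros (e.g., starting from $\sin(-\log(1-z))$, whose zero set $\{1 - e^{-n\pi}\}$ accumulates at $z=1$, and passing to a suitable antiderivative to bring the Taylor coefficients down to order $1/i$ while preserving an oscillatory zero structure on the real axis).

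Once such an $F$ is in hand with distinct nonzero zeros $(\lambda_n) \subseteq \mathbb{D} \cap \mathcal{C} \subseteq \rho(T)^{-1}$, the remainder of the proof is a direct transcription of Theorem~\ref{main}. Put $Y := [h(\lambda_n,e)]$; Lemma~\ref{h_lambda-ai} yields almost invariance and Lemma~\ref{lin-indep} (applicable since $T$ has no eigenvectors) yields $\dim Y = \infty$. For each $k \ge 0$ define $f_k$ on $\Span\{T^i e\}$ by $f_k(T^i e) = c_{i-k}$ for $i \ge k$ and zero otherwise, with $\norm{f_k} \le \sum_i c_i r_{i+k} < \infty$, and extend via Hahn-Banach. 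Neumann's formula gives $f_k(h(\lambda, e)) = \lambda^{k+1} F(\lambda)$ on a neighborhood of $0$; since both sides are analytic on the connected open set $\mathcal{C} \cap \mathbb{D}$, the identity extends there, and in particular $f_k(h(\lambda_n, e)) = 0$ for every $n$. Linear independence of $\{f_k\}$ follows exactly as in Theorem~\ref{main} (from $f_k(T^i e) = 0$ for $i < k$ and $f_k(T^k e) = c_0 \ne 0$), so $Y$ is of infinite codimension and is thus an almost invariant half-space.
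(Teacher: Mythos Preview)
Your overall architecture is correct and coincides with the paper's: produce a function $F$ analytic on the open unit disk with Taylor coefficients $c_i=O(1/(i+1))$ and infinitely many zeros $(\lambda_n)\subset\mathbb D\setminus\{0\}\subseteq\rho(T)^{-1}$, set $Y=[h(\lambda_n,e)]$, and manufacture annihilating functionals $f_k$ from the coefficients of $z^kF(z)$. Your budget inequality $\sum_{i\ge 0}c_i r_{i+k}\le (k+1)\sum_{n\ge k} r_n/(n+1)$ is exactly the estimate the paper uses, and the endgame (Lemmas~\ref{h_lambda-ai} and~\ref{lin-indep}, the Neumann computation, linear independence of the $f_k$) is fine.

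The genuine gap is the construction of $F$, which you correctly isolate as the crux but do not actually carry out. Both of your proposed routes fail as written. The Taylor coefficients of $\exp\bigl(1/(1-z)\bigr)$ grow like $e^{2\sqrt{n}}$ (optimize $e^{1/(1-r)}/r^n$ over $r<1$; the minimum occurs near $1-r\approx n^{-1/2}$), so no scalar multiple of this function can be absorbed into an $O(1/n)$ budget, and the Great Picard idea collapses. The second route is closer than you realize, but the antiderivative step is both unnecessary and fatal: writing $\sin\bigl(-\log(1-z)\bigr)=\tfrac{1}{2i}\bigl((1-z)^{-i}-(1-z)^{i}\bigr)$ and using $\binom{\pm i}{n}\sim c\,n^{\mp i-1}$ shows the coefficients are \emph{already} $O(1/n)$, while the antiderivative $G(x)=\int_0^x\sin(-\log(1-t))\,dt=\tfrac12-\tfrac{1-x}{\sqrt 2}\sin\bigl(-\log(1-x)+\tfrac\pi4\bigr)$ is strictly positive on $(0,1)$, so it has no zeros there at all.

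The paper sidesteps the construction entirely by quoting a theorem of Newman and Shapiro: there is a Blaschke sequence $(\lambda_n)\subset\mathbb D$ whose Blaschke product $B$ satisfies $B^{(n)}(0)/n!=O\bigl(1/(n+1)\bigr)$. Taking $F=B$ gives a bounded analytic function on $\mathbb D$ with zeros exactly at the $\lambda_n$, the required coefficient decay, and $B(0)\ne 0$; the rest of the proof is then identical to what you wrote. If you prefer an explicit construction, note that $F(z)=z^{-1}\sin\bigl(-\log(1-z)\bigr)$ (dividing out the zero at the origin so that $c_0=F(0)=1\ne 0$) already meets all the requirements, with zeros $\lambda_n=1-e^{-n\pi}$.
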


\begin{proof}
  Let $D$ stands for the unit disk in~$\mathbb C$. For a sequence
  $(\lambda_n) \subset D$ such that
  \begin{equation} \label{lambda}
   \sum\limits_{n=1}^\infty\bigl(1-\abs{\lambda_n}\bigr)<\infty.
  \end{equation}
  the corresponding Blaschke product is defined by 
  \begin{equation} \label{Blaschke}
    B(z)= \prod_{n=1}^\infty \frac{\abs{\lambda_n}}{\lambda_n} 
    \frac{\lambda_n -z}{1-\overline{\lambda_n}\lambda_n}.
  \end{equation}
  It is well known that $B$ is a bounded analytic function on $D$ with
  zeros exactly at $(\lambda_n)$.  According to
  \cite[Theorem~2]{Newman:62} we can choose a sequence
  $(\lambda_n)\subset D$ satisfying~\eqref{lambda} such that
  $\frac{B^{(n)}(0)}{n!} = \mbox{O}\bigl(\frac{1}{n+1}\bigr)$.  Thus
  $B^{(n)}(0) =\mbox{O}\bigl(\frac{n!}{n+1}\bigr)$.  For $m \in
  {\mathbb N}$ set $F_m(z)= z^mB(z)$. Obviously the functions $(F_m)$
  are linearly independent. It is easy to see that for some $C>0$ we
  have
  \begin{equation} \label{coef}
    F_m^{(n)}(0)=
    \begin{cases}
      0 & \mbox{ for }n<j \\ 
      \frac{n!}{m!}B^{(n-m)}(0) \le
      C \frac{n!}{n-m+1}  & \mbox{ for }n \ge m.
    \end{cases}
  \end{equation}
  Put $Y=[h(\lambda_n,e)]_{n=1}^\infty$. By
  Lemma~\ref{h_lambda-ai}, $Y$ is almost invariant under $T$ and $\dim
  Y=\infty$ by Lemma~\ref{lin-indep}. As in the proof of
  Theorem~\ref{main}, we will show that under the conditions of the
  Theorem~\ref{extension} there is a sequence of linearly independent
  functionals annihilating~$Y$.

  Define a linear functional $f_m$ on $\Span x_n$ by
  $f_m(x_n)=\frac{F_m^{(n)}(0)}{n!}$. Since $T$ has no eigenvectors,
  the orbit of $T$ is linearly independent, so $f_m$ is well defined.

  Let's prove that $f_m$ is bounded for every $m\in\mathbb N$. Take
  any $x:=\sum\alpha_nx_n\in\Span x_n$. Using~\eqref{coef}, we obtain
  \begin{multline*}
    \bigabs{f_m(x)}=
    \Bigabs{\sum\alpha_n\frac{F_m^{(n)}(0)}{n!}}\le
    C\sum_{n\ge m}\frac{\abs{\alpha_n}}{n-m+1}\\
    =C\sum_{n\ge m}\frac{\abs{x_n^*(x)}}{n-m+1}
    \le C\norm{x}\sum_{n\ge m}\frac{\norm{x_n^*}}{n-m+1}.
  \end{multline*}
  It suffices to show that $\sum_{n\ge m}\frac{\norm{x_n^*}}{n-m+1}<\infty$.
  Note that
  \begin{displaymath}
    m(n-m+1)=(m-1)(n-m)+n\ge n
  \end{displaymath}
  whenever $n\ge m$, so that 
  \begin{displaymath}
    \sum_{n=m}^{\infty}\frac{\norm{x_n^*}}{n-m+1}
    =m\sum_{n=m}^{\infty}\frac{\norm{x_n^*}}{m(n-m+1)}
    \le m\sum_{n=m}^{\infty}\frac{\norm{x_n^*}}{n}<\infty
  \end{displaymath}
  by assumption. Hence, $f_m$ is bounded, so that we can extend it to~$X$.
  Observe that if $\abs{\lambda}<1$ and $m\in\mathbb N$
  then~\eqref{Neumann} yields that
  \begin{displaymath}
    f_m\big(h(\lambda,e)\big)
    =f_m\Big(\lambda\sum\limits_{n=0}^\infty\lambda^nT^ne\Big)
    = \lambda \sum_{n=0}^\infty \lambda^n \frac{F_m^{(n)}(0)}{n!}
    =\lambda F_m(\lambda),
  \end{displaymath}
  Thus $f_m\big(h(\lambda_k,e)\big)=\lambda_kF_m(\lambda_k)=0$ for all
  $m,k\in\mathbb N$, hence each $f_m$ annihilates~$Y$.

  Finally, the set $\{f_m\}_{m=1}^\infty$ is linearly independent.
  Indeed if it was linearly dependent and a certain linear non-zero
  linear combination of them vanishes, then by writing the Taylor
  expansion of each $F_m$ on $D$ we see that the same linear
  combination of $F_m$'s would vanish. This is a contradiction, since
  the $F_m$'s are linear independent.
\end{proof}

\section{Invariant subspaces of operators with many almost invariant
  half-spaces}

Let $X$ be a Banach space and $T\colon X\to X$ be a bounded operator.
It is well known that if every subspace of $X$ is invariant under $T$
then $T$ must be a multiple of identity. In this section we will
obtain a result of the same spirit for almost invariant half-spaces.

\begin{proposition}\label{rich-aihs}
  Let $X$ be a Banach space and $T \in {\mathcal L}(X)$.  Suppose that
  every half-space of $X$ is almost invariant under~$T$. Then $T$ has
  a non-trivial invariant subspace of finite codimension. Iterating,
  one can get a chain of such subspaces.
\end{proposition}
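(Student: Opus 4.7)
The plan is to argue by contraposition: assume $T$ has no non-trivial $T$-invariant subspace of finite codimension and construct a half-space of $X$ that is not almost invariant under~$T$. The hypothesis translates, via the annihilator correspondence, to the statement that $T^*$ has no eigenvector, since any finite-dimensional $T^*$-invariant subspace of $X^*$ would contain an eigenvector and correspond, upon taking annihilators, to a non-trivial $T$-invariant subspace of finite codimension in~$X$.

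The goal is then to build, inductively, a basic sequence $(x_n)_{n=1}^\infty$ in $X$ such that the combined system $(x_n) \cup (Tx_n)$ is itself basic. Once this is achieved, setting $Y = [x_n]_{n=1}^\infty$ gives a subspace with $T(Y) = [Tx_n]$ by continuity of~$T$; basicness of the combined system forces $T(Y) \cap Y = \{0\}$, and since $\dim [Tx_n] = \infty$, the quotient $(T(Y) + Y)/Y \cong T(Y)$ is infinite-dimensional. In particular $Y$ is infinite-dimensional, of infinite codimension (because $[Tx_n]$ is an infinite-dimensional subspace meeting $Y$ trivially), and not almost invariant under~$T$.

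The inductive step is fueled by the following auxiliary lemma: \emph{if $V \subseteq X$ is of finite codimension, $W \subseteq X$ is finite-dimensional, and $Tx \in W + \mathbb C x$ for every $x \in V$, then $T = \lambda I + K$ for some scalar $\lambda$ and some finite-rank operator $K$.} (Sketch: for $u, v \in V$ linearly independent modulo $W$, write $Tu = w_u + \lambda_u u$, $Tv = w_v + \lambda_v v$, $T(u+v) = w + \lambda(u+v)$, and compare to force $\lambda_u = \lambda_v = \lambda$; then $(T-\lambda I)(V) \subseteq W$, and the finite codimension of $V$ promotes $T - \lambda I$ to a finite-rank operator on~$X$.) At stage $n$ of the construction, let $W_n = [x_i, Tx_i : i < n]$; a Mazur-type basic-sequence extension lemma supplies a finite-codimensional subspace $V_n \subseteq X$ such that any $x \in V_n$ with $Tx \in V_n$ can be appended to the sequence together with $Tx$ while keeping the basis constant controlled. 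If every $x$ in the finite-codimensional space $V_n \cap T^{-1}(V_n)$ satisfied $Tx \in W_n + \mathbb C x$, the auxiliary lemma would make $T$ a scalar plus a finite-rank operator; then $T^*$ would have an eigenvector inside the (finite-dimensional, $T^*$-invariant) range of $K^*$, contradicting the standing assumption. Hence a suitable $x_n$ exists and the induction proceeds.

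The hard part will be the technical bookkeeping in the inductive construction: the subspace $V_n$ and the finite-dimensional $W_n$ both depend on the vectors already chosen, and one must balance the basic-sequence extension constraints against the independence requirement on~$Tx_n$; the auxiliary lemma is precisely what converts the spectral hypothesis on $T^*$ into the concrete existence statement needed to keep the induction alive. For the final sentence of the proposition, the chain is obtained by iteration: once $Z_1 \subsetneq X$ is $T$-invariant of finite codimension, any half-space $Y \subseteq Z_1$ is still a half-space of $X$ (its codimension in $X$ is the sum of its codimension in $Z_1$ and $\codim Z_1$, still infinite), hence $T(Y) \subseteq Y + F$ for some finite-dimensional $F \subseteq X$; intersecting with the invariant $Z_1$ yields $T|_{Z_1}(Y) \subseteq Y + (F \cap Z_1)$, so $T|_{Z_1}$ satisfies the hypothesis on $Z_1$. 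Applying the proposition to $(Z_1, T|_{Z_1})$ produces $Z_2 \subsetneq Z_1$ of finite codimension in $Z_1$ (and hence in $X$), and so on.
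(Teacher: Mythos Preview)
Your overall strategy---assume $T$ has no finite-codimensional invariant subspace, then manufacture a half-space that is not almost invariant---matches the paper's, but the construction you propose is quite different. The paper does not attempt to build a basic sequence at all; instead it defines functionals recursively by $f_{n+1}|_{Y_n}=f_n\circ T|_{Y_n}$ on a decreasing chain $Y_n=\bigcap_{k\le n}\ker f_k$, chooses vectors $z_n$ biorthogonal to the~$f_n$, and takes $Z=[z_{2k}]$. The ``odd'' functionals $f_{2k-1}$ vanish on $Z$ (so $Z$ is a half-space), while the triangular system $f_{2k-1}(Tz_{2k})\ne 0$, $f_{2i-1}(Tz_{2k})=0$ for $i<k$ forces any $F$ with $TZ\subseteq Z+F$ to be infinite-dimensional. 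Your auxiliary lemma (if $Tx\in W+\mathbb C x$ for every $x$ in a finite-codimensional subspace then $T=\lambda I+$\,finite rank) is a clean and correct substitute for the paper's step ``$Y_n$ is not invariant, so pick $z_{n+1}\in Y_n$ with $f_n(Tz_{n+1})\ne 0$.''

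The genuine gap is the Mazur step. Mazur's technique lets you choose a finite-codimensional $V_n$ (relative to $W_n$) so that \emph{any one} vector from $V_n$ can be appended with controlled basis constant; it does not let you append the \emph{pair} $x_n,\,Tx_n$ in one stroke. Concretely, with $x_n,Tx_n\in V_n$ and $\norm{Tx_n}=1$ you do get $\norm{w}\le(1+\varepsilon)\norm{w+\alpha x_n+\beta Tx_n}$, but to bound $\norm{w+\alpha x_n}$ by $\norm{w+\alpha x_n+\beta Tx_n}$ you need a lower bound on $d(Tx_n,\,W_n+\mathbb C x_n)$. Your auxiliary lemma guarantees this distance is \emph{positive} for some $x_n$, not that it is bounded away from~$0$; without that, the basis constants can blow up and you lose the conclusion $[x_n]\cap[Tx_n]=\{0\}$. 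A quantitative version of your lemma (``if $d(Tx,\mathbb C x)\le c\norm{Tx}$ for every $x$ in a finite-codimensional subspace, then $T=\lambda I+K$'') is what you would need, and this is not obvious. The paper's functional construction sidesteps the issue entirely because it never asks for a uniform basis constant---biorthogonality is enough. (Minor point: $T(Y)=[Tx_n]$ is not literally true since $T(Y)$ need not be closed, but only the inclusion $T(Y)\subseteq[Tx_n]$ is needed, so this does not hurt.) Your iteration argument for the chain is correct and in fact more explicit than what the paper writes.
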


\begin{proof}
  Let's assume that $T$ has no non-trivial invariant subspaces of
  finite codimension. We will now construct by an inductive procedure
  a half-space that is not almost invariant under~$T$.

  Put $Y_0=X$. Fix an arbitrary non-zero $z_1\in X$.  Choose $f_1\in
  X^*$ such that $f_1(z_1)\ne 0$ and put $Y_1=\ker f_1$.

  Since $Y_1$ is not invariant under~$T$, there exists $z_2\in Y_1$
  such that $f_1(Tz_2)\ne 0$. Define $g_2\in Y_1^*$ by
  $g_2(y)=f_1(Ty)$.  Let $P_1$ be a projection along
  $\Span\left\{z_1\right\}$ onto~$Y_1$. Define $f_2=g_2\circ P_1\in
  X^*$.  Now put $Y_2=\ker g_2=\ker f_2\cap Y_1$. Then we have
  $Y_1=Y_2\oplus\Span\left\{z_2\right\}$.  Since $f_1(Ty)=g_2(y)=0$
  for all $y\in Y_2$, we have $TY_2\subseteq Y_1$.

  Continuing inductively with this procedure, we will build sequences
  $(z_n)$ of vectors, $(f_n)$ of functionals, and $(Y_n)$ of subspaces
  such that
  \begin{enumerate}
    \item\label{first} $z_{n+1}\in Y_{n}$,
    \item $Y_{n+1}=\ker f_{n+1}\cap Y_n=\bigcap\limits_{k=1}^{n+1}\ker f_k$,
    \item\label{three} $f_{n+1}(y)=f_n(Ty)$ for all $y\in Y_n$,
    \item\label{four} $Y_n=Y_{n+1}\oplus{\rm span}\left\{z_{n+1}\right\}$,
    \item $TY_{n+1}\subseteq Y_n$, and
    \item\label{last} $f_n(z_i)=0\Leftrightarrow i\ne n$,
  \end{enumerate}
  for all $n\in\mathbb N$. Indeed, suppose we have defined~$Y_i$,
  $z_i$, and $f_i$, $1\le i\le n$,
  satisfying~\eqref{first}--\eqref{last}. Define $g_{n+1}\in Y_n^*$ by
  $g_{n+1}(y)=f_n(Ty)$ and put $f_{n+1}=g_{n+1}\circ P_n\in X^*$ where
  $P_n$ is a projection along $[z_k]_{k=1}^n$ onto $Y_n$ \big(taking
  $P_n(x)=x-\sum_{k=1}^{n}\frac{f_k(x)}{f_k(z_k)}z_k$ will do
  the job\big).  Again, there is $z_{n+1}\in Y_n$ such that
  $f_{n+1}(z_{n+1})\ne 0$.  Put $Y_{n+1}=\ker f_{n+1}\cap Y_n$.
  Evidently, \eqref{first}--\eqref{last} are then satisfied.

  It is easily seen that the sequence $(z_k)$ is linearly independent.
  Put $Z=[z_{2k}]_{k=1}^\infty$. Clearly $\dim Z=\infty$. It is
  also easy to see that $f_{2k-1}|_Z=0$ for all $k\in\mathbb N$.
  Thus, $Z$ is actually a half-space.

  By assuption of the theorem, there exists $F$ with $\dim F=m<\infty$
  such that $TZ\subseteq Z+F$. For each $k\in\mathbb N$, pick $u_k\in
  Z$ and $v_k\in F$ such that $Tz_{2k}=u_k+v_k$. By~\eqref{four}, we
  have $z_{2k}\in Y_{2k-1}$. Applying~\eqref{three} with $n=2k-1$, we
  get $f_{2k}(z_{2k})=f_{2k-1}(Tz_{2k})$. Now~\eqref{last} yields
  $f_{2k-1}(Tz_{2k})\ne 0$.

  On the other hand, if $1\le i<k$ then $z_{2k}\in Y_{2i-1}$, so that
  analogously $f_{2i}(z_{2k})=f_{2i-1}(Tz_{2k})$. Therefore
  $f_{2i-1}(Tz_{2k})=0$.

  Since $f_{2j-1}|_Z=0$ for all $j\in\mathbb N$, we have
  $f_{2j-1}(u_k)=0$ for all $j$ and~$k$. Therefore, for all
  $k\in\mathbb N$ and $1\le i<k$, we have $f_{2k-1}(v_k)\ne 0$ and
  $f_{2i-1}(v_k)=0$. This implies, however, that $F$ is infinite
  dimensional.
\end{proof}

Using a similar technique, we obtain the following result.

\begin{proposition}
  For all $T\in\mathcal L(X)$ and $n\in\mathbb N$ then there exists a
  subspace $Y$ of $X$ with $\codim Y=n$ and a vector $e_Y\in X$ such
  that $TY\subseteq Y+\Span\{e_Y\}$.
\end{proposition}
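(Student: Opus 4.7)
The plan is to proceed by induction on $n$, adapting the chain construction from the proof of Proposition~\ref{rich-aihs} while dropping the hypothesis that $T$ has no non-trivial finite-codimensional invariant subspace. The base case $n=1$ is immediate: any nonzero $f_1\in X^*$ gives $Y=\ker f_1$ of codimension $1$, and for any $e_Y\notin Y$ we have $X=Y\oplus\Span\{e_Y\}$, so $TY\subseteq Y+\Span\{e_Y\}$ trivially.

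For the inductive step, fix $n\ge 2$ and attempt to run the construction of Proposition~\ref{rich-aihs}: choose $z_1\ne 0$ and $f_1\in X^*$ with $f_1(z_1)\ne 0$, put $Y_1=\ker f_1$, and at each step $k\ge 2$ (having produced $Y_{k-1}$, $f_{k-1}$, $z_{k-1}$ satisfying the analogues of properties (i)--(vi) there) try to find $z_k\in Y_{k-1}$ with $f_{k-1}(Tz_k)\ne 0$. If such $z_k$ exists, define $g_k(y)=f_{k-1}(Ty)$ on $Y_{k-1}$, extend to $f_k=g_k\circ P_{k-1}$ on all of $X$, and set $Y_k=\ker f_k\cap Y_{k-1}$.

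If the procedure succeeds all the way through step $n$, take $Y:=Y_n$ and $e_Y:=z_n$: then $\codim Y=n$ and the relations from Proposition~\ref{rich-aihs} give $TY_n\subseteq Y_{n-1}=Y_n\oplus\Span\{z_n\}$, so $TY\subseteq Y+\Span\{e_Y\}$. Otherwise the procedure stalls at some step $k$ with $2\le k\le n$, meaning $f_{k-1}(Ty)=0$ for every $y\in Y_{k-1}$, i.e., $TY_{k-1}\subseteq\ker f_{k-1}$. Combining this with the inductive fact $TY_{k-1}\subseteq Y_{k-2}$ (which follows from the recursion $f_i(Ty)=f_{i+1}(y)$ for $y\in Y_i$, together with $Y_{k-1}\subseteq\ker f_{i+1}$ for $i\le k-2$), we obtain $TY_{k-1}\subseteq Y_{k-2}\cap\ker f_{k-1}=Y_{k-1}$, so $Y_{k-1}$ is a closed $T$-invariant subspace of codimension $k-1$ with $1\le k-1<n$.

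At this point apply the induction hypothesis to the restriction $T|_{Y_{k-1}}\in\mathcal{L}(Y_{k-1})$ with the integer $m:=n-k+1<n$: this yields a subspace $Y\subseteq Y_{k-1}$ of codimension $m$ in $Y_{k-1}$ together with $e_Y\in Y_{k-1}$ satisfying $(T|_{Y_{k-1}})Y\subseteq Y+\Span\{e_Y\}$. Then $\codim_X Y=(k-1)+m=n$ and $TY\subseteq Y+\Span\{e_Y\}$, completing the induction. The main delicate point is verifying the containment $TY_{k-1}\subseteq Y_{k-2}$ produced by the recursion; once that is in hand, the dichotomy between completing the chain and bailing out into the restricted operator on an intermediate invariant subspace resolves the proposition cleanly.
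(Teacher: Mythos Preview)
Your proposal is correct and follows essentially the same approach as the paper: both use induction on $n$, the chain construction of Proposition~\ref{rich-aihs}, and the dichotomy between running the chain to completion versus finding an intermediate $T$-invariant subspace of smaller codimension and applying the induction hypothesis to the restriction. The only organizational difference is that the paper checks upfront for an invariant subspace of codimension $\le n$ (and if none exists, the chain is guaranteed to run to step $n$), whereas you attempt the chain directly and recognize a stall at step $k$ as producing an invariant $Y_{k-1}$; these are equivalent reorderings of the same argument.
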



\begin{proof}
  Proof is by induction on~$n$. For $n=1$, any hyperplane satisfies the
  conclusion of the statement. Suppose that the statement is valid for
  all $k<n$.

  Suppose that $X$ contains a subspace $Y$ of codimension $j\le n$
  that is invariant under~$T$. If $j=n$ then we are done. If $j<n$
  then by the induction assumption we can find $Z\subseteq Y$ such
  that $Z$ has codimension $n-j$ in $Y$ and $TZ\subseteq Z+[y]$ for
  some $y\in Y$. Indeed, consider the restriction $T'$ of $T$ to~$Y$.
  Now we apply the induction assumption to $T'$ and to $n-j$ and
  produce a subspace $Z\subseteq Y$ invariant under $T$ of
  codimension~$j$.  But then $Z$ has codimension $n$ in $X$ and still
  $TZ\subseteq Z+[y]$, so that $Z$ satisfies the conclusion.

  Therefore, we can assume that $Z$ has no invariant subspaces of
  codimension $k\le n$. Thus we can use the argment of Proposition~5.1
  to show that there exist (finite) sequences of vectors
  $(z_k)_{k=1}^{n+1}$, functionals $(f_k)_{k=1}^{n}$, and subspaces
  $(Y_k)_{k=1}^n$ such that the conditions~\eqref{first}--\eqref{last}
  are satisfied. In particular, we get:
  \begin{displaymath}
   Y_n=\bigcap\limits_{k=1}^n\ker f_k,
  \end{displaymath}
  and $(f_k)$ are linearly independent, so that ${\rm codim}\,Y_n=n$.
  Finally, by~\eqref{last} and~\eqref{four}, we have  $TY_n\subseteq
  Y_{n-1}=Y_n+[z_n]$.
\end{proof}

\medskip

\textbf{Acknowledgments.} We would like to thank Heydar Radjavi for
helpful discussions and suggestions. A part of the work on this paper
was done while the first and the fourth authors were attending SUMIRFAS
in the summer of 2008; we would like to express our thanks to
its organizers.


\begin{thebibliography}{00}

 \bibitem{Aliprantis:06} 
  C.D.~Aliprantis, K.C.~Border,
  \newblock \emph{Infinite dimensional analysis. A hitchhiker's guide. Third edition}.
  \newblock Springer, Berlin, 2006.

  \bibitem{Enflo:87}
  P. Enflo, 
  \newblock \emph{On the invariant subspace problem for Banach spaces,} 
  \newblock Acta Math. 158 (34) (1987) 213-313.

  \bibitem{Lindenstrauss:77}
  J.~Lindenstrauss, L.~Tzafriri,
  \newblock \emph{Classical Banach spaces. I. Sequence spaces}.
  \newblock Ergebnisse der Mathematik und ihrer Grenzgebiete,
  Vol. 92. Springer-Verlag, Berlin-New York, 1977.

  \bibitem{Newman:62}
  D.J.~Newman, H.S.~Shapiro,
  \newblock \emph{The Taylor coefficients of inner functions},
  \newblock Michigan J. Math 9 (1962) 249-255.

  \bibitem{Radjavi:03}
  H.~Radjavi, P.~Rosenthal,
  \newblock \emph{Invariant subspaces, Second edition}.
  \newblock Dover Publications, Inc., Mineola, NY, 2003.

  \bibitem{Read:84}
  C.J. Read, 
  \newblock \emph{A solution to the invariant subspace problem}, 
  \newblock Bull. London Math. Soc. 16 (4) (1984) 337-401.
\end{thebibliography}
\end{document}